\newtheorem{theorem}{Theorem}[section]
\newtheorem{corollary}[theorem] {Corollary}
\newtheorem{lemma} [theorem]{Lemma}
\newtheorem{proposition}[theorem]{Proposition}
\newtheorem{remark}[theorem]{Remark}
\begin{document}

\title{Exploring the Relationships Between the Divisors of Friends of $10$} 
\author[Sagar Mandal]{Sagar Mandal}
\address{Department of Mathematics and Statistics, Indian Institute of Technology Kanpur\\ Kalyanpur, Kanpur, Uttar Pradesh 208016, India}
\email{sagarmandal31415@gmail.com}

\maketitle
\let\thefootnote\relax
\footnotetext{\it Keywords: Abundancy Index, Sum of Divisors, Friendly Numbers, Solitary Numbers, Perfect Numbers, Odd Perfect Numbers}
\let\thefootnote\relax
\footnotetext{\it MSC2020: 11A25} 
\maketitle

\begin{abstract}
A solitary number is a positive integer that shares its abundancy index only with itself. $10$ is the smallest positive integer suspected to be solitary, but no proof has been established so far. In this paper, we prove that not all half of the exponents of the prime divisors of a friend of 10 are congruent to $1$ modulo $3$. Furthermore, we prove that if $F=5^{2a}\cdot Q^2$ ($Q$ is an odd positive integer coprime to $15$) is a friend of $10$, then
$\sigma(5^{2a})+\sigma(Q^2)$ is congruent to $6$ modulo $8$ if and only if $a$ is even, and $\sigma(5^{2a})+\sigma(Q^2)$ is congruent to $2$ modulo $8$  if and only if $a$ is odd. In addition, if we set $Q={\displaystyle \prod_{i=2}^{\omega(F)}}p_{i}^{a_i}$ and $a=a_1$, where $p_i$ are prime numbers, then we establish that $$F>\frac{25}{81}\cdot\prod_{i=1}^{\omega(F)}(2a_i + 1)^2,$$
in particular $F> 625\cdot 9^{\omega(F)-3}.$
\end{abstract}

\section{Introduction}
The sum of positive divisors function $\sigma(F)$ is defined by
$$\sigma(F)=\sigma_{1}(F)=\sum_{\substack{d \mid F\\d>0}}{d}.$$
The abundancy index of $F$ is defined by
$$I(F)=\frac{\sigma(F)}{F}.$$
Note that, $\sigma(F)$, and $I(F)$ are both multiplicative. 
Let $F$ and $E$ be two distinct positive integers such that $I(E)=I(F)$ then we say $E$ and $F$ are friends of each other. A number that has no friends is called a solitary number. Some numbers, such as $10, 14, 15, \text{and}~20$, are believed to be solitary, however no proofs confirming their solitary nature are available to date. $10$ is the smallest positive integer suspected to be solitary. A positive integer $F$ is said to be a friend of $10$ if $I(F)=9/5$ that is $\sigma(F)=\frac{9}{5}F$. In order to find one friend $F$ of 10, many authors gave necessary conditions for the existence of $F$. J. Ward \cite{ward2008does} proved that, $F$ is an odd square positive integer with $\omega(F)\geq 6$ and $5$ is the smallest positive divisor of $F$. In \cite{SS2024}, the authors improved J.Ward's result by proving that in order to become a friend of $10$, $F$ must have $\omega(F)\geq 7$, additionally they established some necessary properties of $F$. In \cite{SS} necessary upper bounds for the second, third, and fourth smallest prime divisors of $F$ have been proposed, further, the author in \cite{SaSo} generalized the paper \cite{SS} and established general upper bounds for all prime divisors of a friend of $10$. In this paper, we establish some relations between the divisors of friends of $10$. Note that, any friend of $10$ can be written as $F = 5^{2a}\cdot Q^2$ (where $Q$ is an odd positive integer coprime to $15$, and $a\geq 1$) then 
$$I(F)=\frac{\sigma(F)}{F}=\frac{\sigma(5^{2a}\cdot Q^2)}{5^{2a}\cdot Q^2}=\frac{\sigma(5^{2a})\cdot \sigma(Q^2)}{5^{2a}\cdot Q^2}=\frac{9}{5}$$
that is
\begin{align}\label{@}
\sigma(5^{2a})\cdot\sigma(Q^2)=9\cdot{5^{2a-1}}\cdot{Q^2}.   \end{align}

Further, if we set $a=a_1$, $p_1=5$, and $Q={\displaystyle\prod_{i=2}^{\omega(F)}p_i^{a_i}}$ ($p_i>5$ for all $2\leq i\leq \omega(F)$ are prime numbers) then from (\ref{@}) we have
$$\sigma(5^{2a_1})\cdot\prod_{i=2}^{\omega(F)}\sigma(p_i^{2a_i})=9\cdot 5^{2a_1-1}\cdot\prod_{i=2}^{\omega(F)}p_i^{2a_i}.$$
A well-known open problem relating to the abundancy index is the existence of odd perfect numbers (an odd positive integer having abundancy index $2$ is called an odd perfect number). Inspired by the papers \cite{odd1970, odd2012} on odd perfect numbers where the authors essentially discussed the exponents of prime divisors of an odd perfect number, we give a similar type of result for a friend of $10$,

\begin{theorem}\label{thm1.1}
If $F=5^{2a_1}\cdot {\displaystyle\prod_{i=2}^{\omega(F)}p_i^{2a_i}}$($p_1=5$) is a friend of 10, then not all $a_i$ are congruent to $13$ modulo $27$.
\end{theorem}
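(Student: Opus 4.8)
The plan is to pass to the $3$-adic valuation of the generalized abundancy relation
$$\sigma(5^{2a_1})\cdot\prod_{i=2}^{\omega(F)}\sigma(p_i^{2a_i})=9\cdot 5^{2a_1-1}\cdot\prod_{i=2}^{\omega(F)}p_i^{2a_i},$$
and to argue by contradiction. First I would observe that since $5$ and every $p_i>5$ are coprime to $3$, the right-hand side has $3$-adic valuation exactly $v_3(\mathrm{RHS})=v_3(9)=2$. Hence the valuation of the left-hand side must also equal $2$, i.e. $\sum_{i=1}^{\omega(F)}v_3\bigl(\sigma(p_i^{2a_i})\bigr)=2$, where $p_1=5$.

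Next I would compute $v_3\bigl(\sigma(p^{2a})\bigr)$ for each prime $p\neq 3$ according to the residue of $p$ modulo $3$. Writing $\sigma(p^{2a})=1+p+\cdots+p^{2a}$, if $p\equiv 2\pmod 3$ the alternating reduction gives $\sigma(p^{2a})\equiv 1\pmod 3$, so $v_3\bigl(\sigma(p^{2a})\bigr)=0$; in particular this covers $p_1=5$, since $5\equiv 2\pmod 3$. If instead $p\equiv 1\pmod 3$, I would write $\sigma(p^{2a})=\frac{p^{2a+1}-1}{p-1}$ and apply the lifting-the-exponent lemma at the prime $3$ (valid because $3\mid p-1$ and $3\nmid p$), obtaining
$$v_3\bigl(\sigma(p^{2a})\bigr)=v_3\bigl(p^{2a+1}-1\bigr)-v_3(p-1)=v_3(2a+1).$$
Thus only the primes $p_i\equiv 1\pmod 3$ contribute to the valuation, and each contributes exactly $v_3(2a_i+1)$.

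To finish, suppose for contradiction that every $a_i\equiv 13\pmod{27}$. Then $2a_i+1=27(1+2k_i)$ for some integer $k_i$, so $v_3(2a_i+1)\geq 3$ for all $i$. Consequently $v_3\bigl(\sigma(p_i^{2a_i})\bigr)\geq 3$ for each prime $p_i\equiv 1\pmod 3$, while the primes $p_i\equiv 2\pmod 3$ contribute $0$. Therefore $v_3(\mathrm{LHS})$ is either $0$, if no $p_i\equiv 1\pmod 3$ occurs, or at least $3$, if at least one does; in neither case can it equal $2$, contradicting the valuation count above. This contradiction shows that the exponents $a_i$ cannot all be congruent to $13$ modulo $27$.

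I expect the only genuinely delicate step to be the clean application of lifting-the-exponent together with the case split modulo $3$; everything else is bookkeeping. The choice of the residue $13$ modulo $27$ is precisely calibrated so that $2a_i+1\equiv 0\pmod{27}$, which forces the contribution of every relevant prime strictly past the target valuation $2$, and this is exactly what drives the contradiction.
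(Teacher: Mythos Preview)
Your argument is correct and is in fact cleaner and more conceptual than the paper's. Both proofs ultimately contradict the fact that $v_3(\sigma(F))=2$, but they reach it differently.

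The paper argues concretely: from $a_1\equiv 13\pmod{27}$ it first deduces (via the order of $5$ modulo $19$) that $19\mid\sigma(5^{2a_1})$, hence $19\mid F$; then, using $a_2\equiv 13\pmod{27}$ for the exponent of $19$, it shows directly that $27\mid\sigma(19^{2a_2})$, forcing $27\mid\sigma(F)$ and contradicting $9\parallel\sigma(F)$. This is a two-step construction that manufactures a specific prime $p_i\equiv 1\pmod 3$ inside $F$ and then exploits it.

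Your route bypasses this by computing $v_3(\sigma(p_i^{2a_i}))$ uniformly via lifting-the-exponent: primes $p_i\equiv 2\pmod 3$ contribute $0$, while primes $p_i\equiv 1\pmod 3$ contribute $v_3(2a_i+1)\ge 3$ under the hypothesis. The dichotomy ``sum is $0$ or $\ge 3$'' then kills both cases at once, without ever having to locate a particular prime like $19$ inside $F$. What you lose is the paper's explicit identification of a forced prime divisor; what you gain is a shorter, structurally transparent argument that makes clear why the modulus $27$ (i.e.\ $v_3(2a_i+1)\ge 3>2$) is exactly the threshold needed.
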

Motivating by the above theorem we prove the following theorem which is even better as we can observe that, if $a_i\equiv 13 \pmod{27}$ holds for all $1\leq i\leq \omega(F)$ then it would imply that $a_i\equiv 13 \pmod{3}$ i.e., $a_i\equiv 1 \pmod{3}$ holds for all $1\leq i\leq \omega(F)$, thus Theorem \ref{thm1.1} follows from Theorem \ref{thm1.2} (recall contrapositive statements). 
\begin{theorem}\label{thm1.2}
If $F=5^{2a_1}\cdot {\displaystyle\prod_{i=2}^{\omega(F)}p_i^{2a_i}}$($p_1=5$) is a friend of 10, then not all $a_i$ are congruent to $1$ modulo $3$.
\end{theorem}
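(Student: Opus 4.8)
The plan is to assume, for contradiction, that $a_i\equiv 1\pmod 3$ for all $1\le i\le \omega(F)$, and then to track the exact power of $3$ on the two sides of the defining identity
$$\sigma(5^{2a_1})\cdot\prod_{i=2}^{\omega(F)}\sigma(p_i^{2a_i})=9\cdot 5^{2a_1-1}\cdot\prod_{i=2}^{\omega(F)}p_i^{2a_i}.$$
Because every $p_i>5$ is coprime to $15$ (so $p_i\neq 3$) and $3\nmid 5$, the right-hand side has $3$-adic valuation exactly $2$. My goal is to force the left-hand side to be divisible by $3^3$, which contradicts the equality.

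The engine is a short lemma on $N_p:=1+p+p^2=\sigma(p^2)$ for a prime $p\neq 3$. First I would record two facts: (i) $3\mid N_p$ precisely when $p\equiv 1\pmod 3$; and (ii) every prime factor $q\neq 3$ of $N_p$ satisfies $q\equiv 1\pmod 3$, since $q\mid p^3-1$ forces the multiplicative order of $p$ modulo $q$ to be $1$ or $3$, where order $1$ gives $q=3$ and order $3$ gives $3\mid q-1$. I would also note that under the assumption $a_i\equiv 1\pmod 3$ we have $3\mid 2a_i+1$, whence $p^3-1\mid p^{2a_i+1}-1$ and therefore $\sigma(p_i^2)\mid \sigma(p_i^{2a_i})$; since $\sigma(p_i^{2a_i})$ divides the right-hand side, every prime factor of $\sigma(p_i^2)$ other than $3$ must itself divide $F$.

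Now comes the bootstrapping. Starting from $p_1=5$, the identity $\sigma(5^2)=31$ forces $31\mid F$; as $31\equiv 1\pmod 3$, the factorization $\sigma(31^2)=993=3\cdot 331$ forces $331\mid F$; as $331\equiv 1\pmod 3$, the factorization $\sigma(331^2)=109893=3\cdot 7\cdot 5233$ forces $7\mid F$ and $5233\mid F$. All four primes $31,\,331,\,7,\,5233$ are $\equiv 1\pmod 3$, so at least three distinct prime divisors of $F$ lie in the residue class $1\pmod 3$. For each such prime divisor $p_i$, fact (i) gives $3\mid \sigma(p_i^2)\mid\sigma(p_i^{2a_i})$, so $v_3\!\left(\sigma(p_i^{2a_i})\right)\ge 1$; summing over the (at least three) such primes shows the left-hand side has $3$-adic valuation at least $3$, contradicting the value $2$ on the right. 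Hence not all $a_i$ are congruent to $1$ modulo $3$.

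The main obstacle is guaranteeing that the factor of $3$ \emph{replicates} rather than the chain terminating: the crux is fact (ii), which traps the nontrivial prime factors of $1+p+p^2$ in the class $\equiv 1\pmod 3$ and so keeps the chain $5\to 31\to 331\to\{7,5233\}$ producing new primes of that class. I would verify the primality and the factorizations $\sigma(31^2)=3\cdot 331$ and $\sigma(331^2)=3\cdot 7\cdot 5233$ with care, since the entire contradiction rests on reaching three prime divisors $\equiv 1\pmod 3$ before the defining equation's meagre supply of $3^2$ is exhausted.
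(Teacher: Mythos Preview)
Your proof is correct and follows essentially the same route as the paper: both run the chain $5\to 31\to 331\to 7$ to exhibit three prime divisors of $F$ congruent to $1\pmod 3$, each contributing a factor of $3$ to $\sigma(F)$ and thereby contradicting $9\,\|\,\sigma(F)$. The only difference is cosmetic: where the paper repeatedly invokes its Theorem~\ref{thm 3.1}, you work directly with the divisibility $\sigma(p_i^2)\mid\sigma(p_i^{2a_i})$ (valid since $3\mid 2a_i+1$) and explicit factorizations of $1+p+p^2$, and your fact~(ii) is a pleasant structural bonus explaining why the chain can never leave the residue class $1\pmod 3$.
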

As a consequence of Theorem \ref{thm1.2} we have an alternative proof of the fact that not all $a_i=1$, which can also be deduced from Theorem 1.9\cite{SS2024} (the authors \cite{SS2024} proved that if $F=5^{2a}\cdot Q^2$ is a friend of $10$ then $Q$ must be non squarefree and thus the fact follows).\\
In number theory, congruence relations play a fundamental role as they provide a structured and systematic way to analyze mathematical problems. The following two theorems establish relationships between the divisors of $F$ through specific congruence relations,
\begin{theorem}\label{thm1.3}
If $F=5^{2a}\cdot Q^2$ is a friend of 10, then $\sigma(5^{2a})+\sigma(Q^2)\equiv 6 \pmod 8$ if and only if $a$ is even. 
\end{theorem}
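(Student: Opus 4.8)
The plan is to work entirely modulo $8$, reducing the defining relation (\ref{@}) and then extracting the residues of $\sigma(5^{2a})$ and $\sigma(Q^2)$ separately. Write $A=\sigma(5^{2a})$ and $B=\sigma(Q^2)$; since $5^{2a}$ and $Q^2$ are odd perfect squares, both $A$ and $B$ are odd. First I would reduce (\ref{@}) modulo $8$: because $9\equiv 1$, because $Q^2\equiv 1$ (every odd square is $1$ mod $8$), and because $5^{2a-1}=5\cdot 25^{a-1}\equiv 5\pmod 8$, the relation $A\cdot B = 9\cdot 5^{2a-1}\cdot Q^2$ collapses to
\begin{align*}
A\cdot B\equiv 5\pmod 8.
\end{align*}

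Next I would compute $A=\sigma(5^{2a})=\sum_{j=0}^{2a}5^{j}$ modulo $8$ directly. Since $5^{j}\equiv 1\pmod 8$ for even $j$ and $5^{j}\equiv 5\pmod 8$ for odd $j$, and among the exponents $j=0,1,\ldots,2a$ there are $a+1$ even values and $a$ odd values, this gives $A\equiv (a+1)\cdot 1 + a\cdot 5 = 6a+1\pmod 8$. The structural fact I would then exploit is that every odd residue class modulo $8$ squares to $1$, so $A^{2}\equiv 1\pmod 8$. Multiplying $A\cdot B\equiv 5$ through by $A$ yields $B\equiv 5A\pmod 8$, which pins down $B$ in terms of $A$ without requiring any information about $Q$ beyond its oddness.

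Finally I would simply add the two residues:
\begin{align*}
A+B\equiv A+5A = 6A \equiv 6(6a+1)=36a+6\equiv 4a+6\pmod 8,
\end{align*}
so that $\sigma(5^{2a})+\sigma(Q^2)\equiv 4a+6\pmod 8$. Since $4a\equiv 0\pmod 8$ holds exactly when $a$ is even, I conclude that $\sigma(5^{2a})+\sigma(Q^2)\equiv 6\pmod 8$ if and only if $a$ is even (and $\equiv 2\pmod 8$ precisely when $a$ is odd, which incidentally delivers the companion statement of the abstract at no extra cost). The argument is short and carries no deep obstacle; the points requiring genuine care are the clean mod-$8$ reduction of (\ref{@}) — in particular justifying $Q^{2}\equiv 1$ and $5^{2a-1}\equiv 5$ — and verifying that $A$ and $B$ are indeed odd, since this oddness is exactly what makes $A^{2}\equiv 1\pmod 8$ valid and thereby lets me recover $B\bmod 8$ from the product relation alone.
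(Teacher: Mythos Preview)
Your proof is correct and rests on the same two ingredients as the paper's argument: the reduction of the defining relation to $A\cdot B\equiv 5\pmod 8$ and an explicit determination of $A=\sigma(5^{2a})\pmod 8$. The execution, however, is appreciably cleaner. The paper first proves a separate proposition giving $\sigma(5^{2a})\equiv 1,7,5,3\pmod 8$ according as $a\equiv 0,1,2,3\pmod 4$, and then carries out a four-case analysis (two cases for the forward direction, two for the converse by contradiction). You compress this into the single closed form $A\equiv 6a+1\pmod 8$ and, more importantly, you exploit the fact that every odd residue satisfies $A^{2}\equiv 1\pmod 8$ to \emph{invert} the product relation and obtain $B\equiv 5A$ without ever splitting into cases. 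The resulting formula $A+B\equiv 4a+6\pmod 8$ then handles both directions of the biconditional simultaneously and, as you note, delivers Theorem~\ref{thm1.4} for free. What your approach buys is brevity and a uniform treatment of both parity cases; what the paper's case analysis buys is perhaps a little more transparency for a reader unfamiliar with the $A^{2}\equiv 1$ inversion trick.
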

\begin{theorem}\label{thm1.4}
If $F=5^{2a}\cdot Q^2$ is a friend of 10, then $\sigma(5^{2a})+\sigma(Q^2)\equiv 2 \pmod 8$ if and only if $a$ is odd. 
\end{theorem}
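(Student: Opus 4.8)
The plan is to reduce the defining identity (\ref{@}) modulo $8$ and extract $\sigma(Q^2)\pmod 8$ as an explicit function of $a$, from which the claimed equivalence drops out by a one-line comparison. First I would compute $\sigma(5^{2a})\pmod 8$. Since $5^2\equiv 1\pmod 8$, the powers $5^k$ alternate between $1$ (even $k$) and $5$ (odd $k$), so among the $2a+1$ terms of the geometric sum there are $a+1$ contributing $1$ and $a$ contributing $5$, giving
$$\sigma(5^{2a})=\sum_{k=0}^{2a}5^k\equiv (a+1)+5a\equiv 6a+1\pmod 8.$$
In particular $\sigma(5^{2a})$ is odd, hence invertible modulo $8$; moreover every odd residue is its own inverse modulo $8$ (as $3^2\equiv5^2\equiv7^2\equiv1$), a fact I will use to ``divide'' cleanly.

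Next I would reduce (\ref{@}) modulo $8$. Because $Q$ is odd we have $Q^2\equiv 1\pmod 8$; because $9\equiv1\pmod 8$ and $2a-1$ is odd so $5^{2a-1}\equiv 5\pmod 8$, the right-hand side collapses and yields
$$\sigma(5^{2a})\cdot\sigma(Q^2)\equiv 9\cdot 5^{2a-1}\cdot Q^2\equiv 5\pmod 8.$$
Multiplying both sides by the self-inverse factor $\sigma(5^{2a})\equiv 6a+1$ then gives the key formula
$$\sigma(Q^2)\equiv 5(6a+1)\equiv 6a+5\pmod 8.$$

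With both summands known modulo $8$, I would simply add them:
$$\sigma(5^{2a})+\sigma(Q^2)\equiv (6a+1)+(6a+5)=12a+6\equiv 4a+6\pmod 8.$$
The equivalence is now immediate from the parity of $a$: if $a$ is odd then $4a\equiv 4\pmod 8$ and the sum is $\equiv 10\equiv 2$; conversely if the sum is $\equiv 2\pmod 8$ then $4a\equiv -4\equiv 4\pmod 8$, which forces $a$ to be odd. (The even case $4a\equiv 0$, giving $\equiv 6$, is precisely Theorem \ref{thm1.3}, so the two theorems are really a single computation.)

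I do not anticipate a genuine obstacle: the argument is a self-contained modular computation resting only on $5^2\equiv 1$, $Q^2\equiv 1$, and the self-inverse structure of the units modulo $8$. The one place demanding care is the first step, namely the count of how many of the $2a+1$ geometric terms are $\equiv 1$ versus $\equiv 5\pmod 8$; an off-by-one error there would propagate through the whole derivation, so I would verify the formula $\sigma(5^{2a})\equiv 6a+1\pmod 8$ against small cases (for instance $\sigma(5^2)=31\equiv 7$ and $\sigma(5^4)=781\equiv 5$) before trusting the general claim.
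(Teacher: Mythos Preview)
Your argument is correct. Both you and the paper reduce the defining identity $\sigma(5^{2a})\cdot\sigma(Q^2)=9\cdot 5^{2a-1}\cdot Q^2$ modulo $8$ to obtain $\sigma(5^{2a})\cdot\sigma(Q^2)\equiv 5\pmod 8$ and then combine this with information about $\sigma(5^{2a})\pmod 8$. The difference lies entirely in the bookkeeping: the paper relies on Proposition~\ref{prop1}, which tabulates $\sigma(5^{2a})\pmod 8$ in four cases according to $a\pmod 4$, and then runs a case-by-case analysis (two cases for each direction of the biconditional, for both Theorems~\ref{thm1.3} and~\ref{thm1.4}); you instead compress Proposition~\ref{prop1} into the single closed formula $\sigma(5^{2a})\equiv 6a+1\pmod 8$ and exploit the fact that every unit modulo $8$ is its own inverse to solve directly for $\sigma(Q^2)\equiv 6a+5\pmod 8$. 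Adding then gives $\sigma(5^{2a})+\sigma(Q^2)\equiv 4a+6\pmod 8$, from which both Theorems~\ref{thm1.3} and~\ref{thm1.4} follow simultaneously by a parity check on $a$. Your route is more economical---one computation in place of eight cases---while the paper's version is more explicit about each residue class; mathematically they are the same argument.
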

We may now ask: why are the congruence relations in Theorems \ref{thm1.3} and \ref{thm1.4} important? We provide an example to understand how they play a significant role in finding a friend of 
$10$. If $F=5^2\cdot Q^2$ is a friend of $10$, then from Theorem \ref{thm1.4} we have
$$\sigma(5^2)+\sigma(Q^2)\equiv 2 \pmod{8}$$
which is basically
$$\sigma(Q^2)\equiv 3 \pmod{8}$$
that is $\sigma(Q^2)=8t+3$ for some $t \in \mathbb{Z}_{\geq0}$. Since $F$ is friend of $10$ we have
\begin{align}\label{why}
Q^2=\frac{5\cdot 31 }{9\cdot 25}\cdot \sigma(Q^2)=\frac{31}{45}\cdot \sigma(Q^2),
\end{align}

since $Q^2\in \mathbb{Z}^{+}$, we must have
$$\sigma(Q^2)\equiv 0 \pmod{45}$$
putting $\sigma(Q^2)=8t+3$ we get
$$8t+3\equiv 0 \pmod{45}$$
that is
$$t\equiv 39 \pmod{45},$$
thus $t=45t'+39$ for some $t'\in \mathbb{Z}_{\geq0}$, using (\ref{why}) we obtain 
\begin{align*}
    Q^2=248t'+217
\end{align*}
therefore it follows that the form of $F$ must be $F=25\cdot(248t'+217)=6200t'+5425$ for some $t'\in \mathbb{Z}_{\geq 0}$.\\
The next theorem provides a lower bound for a friend $F$ of 
$10$, depending on the exponents of the prime divisors of 
$F$. 
\begin{theorem}\label{thm1.5}
If $F=5^{2a_1}\cdot {\displaystyle\prod_{i=2}^{\omega(F)}p_i^{2a_i}}$($p_1=5$) is a friend of 10, then $F$ is strictly larger than $\frac{25}{81}\cdot {\displaystyle \prod_{i=1}^{\omega(F)}(2a_{i}+1)^{2}}$.
\end{theorem}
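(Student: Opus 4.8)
The plan is to recognize that the constant $\frac{25}{81}$ is exactly $\left(\tfrac{5}{9}\right)^2 = \left(\tfrac{F}{\sigma(F)}\right)^2$, since $F$ is a friend of $10$ and hence $\sigma(F) = \tfrac{9}{5}F$. Writing the number of divisors of $F$ as $\prod_{i=1}^{\omega(F)}(2a_i+1)$ (recall $F = \prod_{i=1}^{\omega(F)} p_i^{2a_i}$), the asserted bound $F > \frac{25}{81}\prod_{i=1}^{\omega(F)}(2a_i+1)^2$ is, after multiplying out and taking positive square roots, equivalent to
\[ \sigma(F) > \sqrt{F}\cdot \prod_{i=1}^{\omega(F)}(2a_i+1). \]
Thus everything reduces to this single clean inequality between $\sigma(F)$, the number of divisors of $F$, and $\sqrt{F}$.

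To establish it I would argue prime power by prime power, exploiting multiplicativity. For a single factor $p^{2a}$ I pair the exponents $j$ and $2a-j$ in $\sigma(p^{2a}) = \sum_{j=0}^{2a} p^{j}$; by AM--GM, $p^{j} + p^{2a-j} \geq 2p^{a}$, with equality only when $j = a$. Summing the $a$ off-centre pairs together with the middle term $p^a$ gives $\sigma(p^{2a}) > (2a+1)\,p^{a}$, the inequality being strict because $p > 1$ forces every non-central pair to be strict. Taking the product over $i = 1, \dots, \omega(F)$ and using $\prod_{i=1}^{\omega(F)} p_i^{a_i} = \sqrt{F}$ yields
\[ \sigma(F) = \prod_{i=1}^{\omega(F)} \sigma(p_i^{2a_i}) > \prod_{i=1}^{\omega(F)} (2a_i+1)\,p_i^{a_i} = \sqrt{F}\cdot\prod_{i=1}^{\omega(F)}(2a_i+1), \]
as required. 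Equivalently, one may pair each divisor $d$ of $F$ with $F/d$ and apply AM--GM globally; the per-prime-power version above is just the multiplicative repackaging of this.

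Finally I would substitute $\sigma(F) = \tfrac{9}{5}F$ into the displayed inequality, divide through by $\sqrt{F} > 0$ to get $\tfrac{9}{5}\sqrt{F} > \prod_{i=1}^{\omega(F)}(2a_i+1)$, hence $\sqrt{F} > \tfrac{5}{9}\prod_{i=1}^{\omega(F)}(2a_i+1)$, and square to obtain $F > \frac{25}{81}\prod_{i=1}^{\omega(F)}(2a_i+1)^2$. The argument is short, so there is no serious obstacle; the only point demanding care is the strictness of the estimate, which is exactly what upgrades the bound to the \emph{strict} inequality asserted in the statement. Strictness is safe here because each $p_i \geq 5 > 1$, so the AM--GM step is strict for at least one pair of exponents in every factor.
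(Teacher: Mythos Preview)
Your argument is correct and essentially identical to the paper's: both establish the key per-prime-power estimate $\sigma(p_i^{2a_i}) > (2a_i+1)\,p_i^{a_i}$ via AM--GM (the paper applies it to all $2a_i+1$ terms at once, you pair $p^j$ with $p^{2a-j}$, but these are the same inequality), then multiply over $i$, substitute $\sigma(F)=\tfrac{9}{5}F$, and square. Your framing of $\tfrac{25}{81}$ as $(F/\sigma(F))^2$ is a nice touch the paper does not make explicit, but the proofs coincide.
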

In \cite{SS2024}, the authors gave an upper bound (see Corollary 1.11) for a friend $F=5^{2a}\cdot Q^2$ of 10: if $\Omega(Q)\leq K$ then $F<5\cdot 6^{(2^{\omega(F)}-1)^2}<5\cdot 6^{(2^{K-2a+1}-1)^2}$, where $\Omega(Q)$ is the total number of prime divisors of $Q$. As a corollary of Theorem \ref{thm1.5} we can give a lower bound for $F$ depending on $\omega(F)$ only. 
\begin{corollary}
If $F$ is a friend of $10$ then
$$F> 625\cdot 9^{\omega(F)-3}.$$
\end{corollary}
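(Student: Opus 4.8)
The plan is to deduce this bound directly from Theorem \ref{thm1.5} by estimating the product $\prod_{i=1}^{\omega(F)}(2a_i+1)^2$ from below. Since every prime divisor of a friend $F$ of $10$ occurs to an even exponent $2a_i$ with $a_i\geq 1$, each factor satisfies $(2a_i+1)^2\geq 3^2=9$. However, the naive estimate $\prod_{i=1}^{\omega(F)}(2a_i+1)^2\geq 9^{\omega(F)}$ fed into Theorem \ref{thm1.5} would only yield $F>\frac{25}{81}\cdot 9^{\omega(F)}=25\cdot 9^{\omega(F)-2}=225\cdot 9^{\omega(F)-3}$, which falls short of the claimed $625\cdot 9^{\omega(F)-3}$. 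So the crux is to sharpen exactly one of the factors.

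The required improvement comes from the consequence of Theorem \ref{thm1.2} recorded earlier: not all the exponents $a_i$ can equal $1$. Hence there is at least one index $j$ with $a_j\geq 2$, and for that index $(2a_j+1)^2\geq 5^2=25$. Isolating this single factor and bounding each of the remaining $\omega(F)-1$ factors below by $9$, I would obtain
$$\prod_{i=1}^{\omega(F)}(2a_i+1)^2\geq 25\cdot 9^{\omega(F)-1}.$$

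Substituting this into Theorem \ref{thm1.5} then gives
$$F>\frac{25}{81}\cdot 25\cdot 9^{\omega(F)-1}=\frac{625}{81}\cdot 9^{\omega(F)-1}=625\cdot 9^{\omega(F)-3},$$
where the strictness is inherited from the strict inequality already present in Theorem \ref{thm1.5}. I do not expect any genuine obstacle here: the entire content is the arithmetic bookkeeping of replacing one factor of $9$ by $25$ (which multiplies the leading constant $225$ by $25/9$ to produce $625$), and the only non-routine ingredient is the previously established fact that the exponents cannot all be $1$.
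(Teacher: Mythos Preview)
Your proposal is correct and follows exactly the same route as the paper's own proof: use Theorem~\ref{thm1.2} to guarantee some $a_j\ge 2$, so that one factor in the product of Theorem~\ref{thm1.5} is at least $25$ while the remaining $\omega(F)-1$ factors are each at least $9$, and then simplify $\tfrac{25}{81}\cdot 25\cdot 9^{\omega(F)-1}=625\cdot 9^{\omega(F)-3}$.
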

\begin{proof}
If $F=5^{2a_1}\cdot{\displaystyle\prod_{i=2}^{\omega(F)}p_i^{2a_i}}$($p_1=5$) is a friend of 10, then not all $a_i=1$ ( by Theorem \ref{thm1.2}) i.e., there exists some $a_j$ such that $a_j\geq2$, thus it implies from Theorem \ref{thm1.5} that
$$F>\frac{25}{81}\cdot 25\cdot 9^{\omega(F)-1}=625\cdot 9^{\omega(F)-3}.$$
\end{proof}
Throughout this paper, we use $p_{1},p_{2},\dots,p_{\omega(n)}$ to denote prime numbers and we consider $a,a_1,\dots,a_{\omega(F)}\in\mathbb{Z}^{+}$.
\section{Proof of The Main Results}\label{Main}
Before we start proving the results, it is useful to state some of the elementary properties of the abundancy index.
\begin{lemma}[\cite{rl,paw}]\end{lemma}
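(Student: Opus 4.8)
The plan is to establish the elementary properties of the abundancy index directly from the definitions, using only the multiplicativity of $\sigma$ and elementary estimates on geometric series. Since the statement here is quoted from \cite{rl,paw}, I read the lemma as collecting the standard facts used throughout the sequel: (i) $I(n)=\sum_{d\mid n}1/d$; (ii) $I$ is multiplicative; (iii) if $d\mid n$ with $d<n$ then $I(d)<I(n)$; (iv) for a prime $p$ and $a\geq 1$ one has $I(p^{a})<I(p^{a+1})<\frac{p}{p-1}$; and (v) if $n=\prod_{i=1}^{k}p_i^{a_i}$ then $I(n)<\prod_{i=1}^{k}\frac{p_i}{p_i-1}$.

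First I would prove (i) via the standard divisor pairing. The map $d\mapsto n/d$ is a bijection of the set of positive divisors of $n$ onto itself, so
\[
\sigma(n)=\sum_{d\mid n} d=\sum_{d\mid n}\frac{n}{d}=n\sum_{d\mid n}\frac{1}{d},
\]
and dividing by $n$ yields $I(n)=\sum_{d\mid n}1/d$. For (ii), since $\sigma$ is multiplicative, whenever $\gcd(m,n)=1$ we get $I(mn)=\sigma(mn)/(mn)=\sigma(m)\sigma(n)/(mn)=I(m)I(n)$, so $I$ is multiplicative as well.

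Next, (iii) follows at once from (i): if $d$ is a proper divisor of $n$, then every divisor of $d$ is also a divisor of $n$, while $n$ is a divisor of $n$ that does not divide $d$; hence $\sum_{e\mid d}1/e<\sum_{e\mid n}1/e$, that is $I(d)<I(n)$, with equality precisely when $d=n$. For (iv) I would treat the prime-power case as a finite geometric sum,
\[
I(p^{a})=\sum_{j=0}^{a}p^{-j}=\frac{1-p^{-(a+1)}}{1-p^{-1}},
\]
from which $I(p^{a+1})-I(p^{a})=p^{-(a+1)}>0$ gives strict monotonicity in the exponent, while letting the series run to infinity gives $I(p^{a})<\frac{1}{1-p^{-1}}=\frac{p}{p-1}$. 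Finally (v) is immediate by applying the multiplicativity (ii) across the prime-power factorization of $n$ and bounding each factor by (iv), so that $I(n)=\prod_{i=1}^{k}I(p_i^{a_i})<\prod_{i=1}^{k}\frac{p_i}{p_i-1}$.

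None of these steps presents a genuine obstacle; the computations are routine. The only point requiring care is the \emph{strictness} of the inequalities in (iii) and (iv): since the later arguments (for instance the bounds on the prime divisors of a friend of $10$ and the lower bound in Theorem \ref{thm1.5}) rely on strict rather than weak comparisons, I would make sure to record each inequality in its strict form and to note explicitly that equality in (iii) holds exactly when $d=n$.
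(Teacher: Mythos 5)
The paper gives no proof of this lemma at all --- it is quoted from \cite{rl,paw} as background --- so your plan of verifying everything directly from the definitions is reasonable, and the items you do treat are proved correctly and by the standard arguments: the divisor-pairing identity $I(n)=\sum_{d\mid n}1/d$, multiplicativity of $I$ inherited from $\sigma$, strict monotonicity along divisibility (which is exactly the paper's part (b), since $F$ is a proper divisor of $\alpha F$ when $\alpha>1$), the geometric-sum formula for prime powers (which together with multiplicativity yields part (c) as stated), and the bound $I(F)<\prod_{i}p_i/(p_i-1)$ of part (e). Your care about strictness is also well placed.

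There is, however, a genuine gap of coverage: your reconstruction omits part (d) of the lemma, the prime-replacement monotonicity --- if $p_1,\dots,p_n$ are distinct primes, $q_1,\dots,q_n$ are distinct primes, and $p_i\leq q_i$ for all $i$, then $I\bigl(\prod_{i=1}^{n}p_i^{l_i}\bigr)\geq I\bigl(\prod_{i=1}^{n}q_i^{l_i}\bigr)$. None of your five items substitutes for it: your (iii) and (iv) compare a number with its multiples, or powers of the \emph{same} prime with different exponents, whereas (d) compares numbers built from \emph{different} primes with the same exponents. The missing step is short but must be recorded: for a fixed exponent $l$, the map $p\mapsto I(p^{l})=\sum_{j=0}^{l}p^{-j}$ is decreasing in $p$, since each term $p^{-j}$ with $j\geq 1$ decreases as $p$ grows; hence $I(p_i^{l_i})\geq I(q_i^{l_i})$ for each $i$, and because the $p_i$ are pairwise distinct and likewise the $q_i$, multiplicativity gives $I\bigl(\prod_{i=1}^{n}p_i^{l_i}\bigr)=\prod_{i=1}^{n}I(p_i^{l_i})\geq\prod_{i=1}^{n}I(q_i^{l_i})=I\bigl(\prod_{i=1}^{n}q_i^{l_i}\bigr)$. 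With that paragraph added, your proposal proves the lemma in full; as written, it proves only four of its five assertions.
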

\begin{enumerate}[(a)]
    \item $I(F)$ is weakly multiplicative, that is, if $F$ and $G$ are two coprime positive integers then $I(FG)=I(F)I(G)$.
\item\label{p2} If $\alpha,F$ are two positive integers and $\alpha>1$. Then $I(\alpha F)>I(F)$.
\item If $p_1$, $p_2$, $p_3$,...,$p_n$ are $n$ distinct prime numbers and $\alpha_1$,$\alpha_2$,...,$\alpha_n$ are positive integers then
\begin{align*}
    I\biggl (\prod_{i=1}^{n}p_i^{\alpha_i}\biggl)=\prod_{i=1}^{n}\biggl(\sum_{j=0}^{\alpha_i}p_i^{-j}\biggl)=\prod_{i=1}^{n}\frac{p_i^{\alpha_i+1}-1}{p_i^{\alpha_i}(p_i-1)}.
\end{align*}
\item\label{p4} If $p_{1}$,...,$p_{n}$ are distinct prime numbers and if $q_{1}$,...,$q_{n}$ are distinct prime numbers such that  $p_{i}\leq q_{i}$ for all $1\leq i\leq n$ then for  positive integers $l_1,l_2,...,l_n$ we have
\begin{align*}
   I \biggl(\prod_{i=1}^{n}p_i^{l_i}\biggl)\geq I\biggl(\prod_{i=1}^{n}q_i^{l_i}\biggl).
\end{align*}
\item\label{p5}  If $F={\displaystyle \prod_{i=1}^{n}p_i^{\alpha_i}}$, then $I(F)<{\displaystyle\prod_{i=1}^{n}\frac{p_i}{p_i-1}}$.
\end{enumerate}

 \begin{theorem}[\cite{SS2024}, Theorem 1.3]\label{thm 3.1}
 Let $p,q$ be two distinct prime numbers with  $p^{k-1}\mid\mid (q-1)$ where $k$ is some positive integer. Then $p$ divides $\sigma(q^{2a})$ if and only if $2a+1 \equiv\ 0 \pmod f$ where $f$ is the smallest odd positive integer greater than $1$ such that $q^{f}\equiv 1 \pmod {p^{k}}$. 
\end{theorem}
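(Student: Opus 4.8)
The plan is to convert the divisibility statement $p\mid\sigma(q^{2a})$ into a single congruence $q^{2a+1}\equiv 1$ modulo a power of $p$, and then to decode that congruence through the multiplicative order of $q$. Everything hinges on the telescoping identity
$$(q-1)\,\sigma(q^{2a}) = q^{2a+1}-1,$$
which holds because $\sigma(q^{2a}) = 1+q+\cdots+q^{2a}$.

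First I would apply the $p$-adic valuation $v_p$ to this identity. By additivity, $v_p(q-1)+v_p(\sigma(q^{2a}))=v_p(q^{2a+1}-1)$, and the hypothesis $p^{k-1}\mid\mid(q-1)$ says exactly that $v_p(q-1)=k-1$. Rearranging gives $v_p(\sigma(q^{2a}))=v_p(q^{2a+1}-1)-(k-1)$, so that $p\mid\sigma(q^{2a})$ if and only if $v_p(q^{2a+1}-1)\geq k$, i.e. if and only if $q^{2a+1}\equiv 1\pmod{p^k}$. This reduces the theorem to analysing when an \emph{odd} power $q^{2a+1}$ is congruent to $1$ modulo $p^k$.

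The second step is to bring in $d=\mathrm{ord}_{p^k}(q)$. I would first observe that $d\neq 1$, since $d=1$ would mean $q\equiv 1\pmod{p^k}$, forcing $v_p(q-1)\geq k$ and contradicting $v_p(q-1)=k-1$. Then $q^{2a+1}\equiv 1\pmod{p^k}$ holds precisely when $d\mid(2a+1)$; as $2a+1$ is odd, this requires $d$ to be odd. The remaining task is to identify $d$ with the quantity $f$ in the statement: since $q^f\equiv 1$ forces $d\mid f$ with $f$ odd, the order $d$ must itself be odd, and being an odd integer greater than $1$ with $q^d\equiv 1\pmod{p^k}$, minimality of $f$ gives $f\leq d$, while $d\mid f$ gives $d\leq f$; hence $f=d$. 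Thus $p\mid\sigma(q^{2a})$ if and only if $f\mid(2a+1)$, as desired.

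I expect the valuation step to be automatic once the telescoping identity is written down, so the only delicate point is the order-and-parity argument of the second step. In particular I would be careful about the degenerate situation in which no odd $f>1$ exists, equivalently $d$ is even: there $q^{2a+1}\not\equiv 1\pmod{p^k}$ for every $a$, so $p$ never divides $\sigma(q^{2a})$ and the stated criterion is vacuously consistent. I would also emphasise that it is precisely the exact-divisibility hypothesis $p^{k-1}\mid\mid(q-1)$ that makes the valuation threshold ``$\geq k$'' collapse to the single modulus $p^k$, which is what allows $f$ to be defined with respect to $p^k$ rather than some other power.
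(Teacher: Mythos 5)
Your proof is correct: the telescoping identity $(q-1)\,\sigma(q^{2a})=q^{2a+1}-1$, the valuation count using $p^{k-1}\mid\mid(q-1)$ to reduce $p\mid\sigma(q^{2a})$ to $q^{2a+1}\equiv 1\pmod{p^k}$, and the identification of $f$ with $\mathrm{ord}_{p^k}(q)$ (including ruling out order $1$ and handling the degenerate case of even order, where $f$ does not exist and $p$ indeed never divides $\sigma(q^{2a})$) are all carried out properly. Note that the present paper does not prove this statement at all but imports it from \cite{SS2024}, and your valuation-plus-order argument is essentially the standard route to it, so there is no genuinely different in-paper proof to compare against.
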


\begin{proposition}\label{prop1} For any positive integer $a$, we have
  \begin{align*}
        \sigma(5^{2a})\equiv  \left\{
\begin{array}{ll}
      1 \pmod 8 & \text{if~} a\equiv 0 \pmod 4\\
      7 \pmod 8 & \text{if~} a\equiv 1 \pmod 4\\
       5 \pmod 8 & \text{if~} a\equiv 2 \pmod 4\\
        3 \pmod 8 & \text{if~} a\equiv 3 \pmod 4\\
\end{array} 
\right. .
         \end{align*}   
\end{proposition}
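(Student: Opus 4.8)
The plan is to expand $\sigma(5^{2a})$ as the geometric sum $\sum_{j=0}^{2a}5^{j}$ and reduce it termwise modulo $8$. The key observation is that $5^{2}=25\equiv 1\pmod 8$, so the powers of $5$ are periodic modulo $8$ with period $2$: one has $5^{j}\equiv 1\pmod 8$ when $j$ is even and $5^{j}\equiv 5\pmod 8$ when $j$ is odd. This reduces the whole problem to a parity count of the exponents.

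First I would split the $2a+1$ exponents $j=0,1,\dots,2a$ according to parity. There are exactly $a+1$ even exponents (namely $0,2,\dots,2a$) and $a$ odd exponents (namely $1,3,\dots,2a-1$). Summing the corresponding residues gives
$$\sigma(5^{2a})=\sum_{j=0}^{2a}5^{j}\equiv (a+1)\cdot 1+a\cdot 5=6a+1\pmod 8.$$
Thus the entire behavior of $\sigma(5^{2a})$ modulo $8$ is captured by the single linear expression $6a+1$.

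It then remains to read off $6a+1\pmod 8$ as a function of $a\pmod 4$. Since $6\cdot 4=24\equiv 0\pmod 8$, the residue $6a\pmod 8$ depends only on $a$ modulo $4$, and evaluating $a\equiv 0,1,2,3\pmod 4$ yields $6a\equiv 0,6,4,2\pmod 8$ respectively. Adding $1$ produces exactly the four values $1,7,5,3\pmod 8$ claimed in the statement, matched with the four residue classes of $a$ in the stated order. Because these four classes partition all positive integers and the four outputs are pairwise distinct, each implication is in fact an equivalence, which is precisely the form in which this proposition is later invoked in Theorems \ref{thm1.3} and \ref{thm1.4}.

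There is no substantive obstacle here: the argument is a short periodicity-plus-counting computation. The only point requiring care is the final case split, where one must check that the period of $6a$ modulo $8$ is $4$ rather than $8$, so that the congruence class of $a$ modulo $4$ genuinely determines the answer; this is exactly the reason the statement is organized around $a\pmod 4$.
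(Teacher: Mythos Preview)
Your proof is correct, but it follows a different route from the paper's. The paper works with the closed form $\sigma(5^{2a})=\frac{5^{2a+1}-1}{4}$, writes $a=4q+r$, and lifts the congruence to modulus $32$: from $5^{8}\equiv 1\pmod{32}$ it deduces that $5^{8q+2r+1}\equiv 5^{2r+1}\pmod{32}$, so the residue of $\sigma(5^{2a})$ modulo $8$ depends only on $r$, and the four cases are then checked directly. Your argument instead reduces the geometric sum termwise using the much shorter period $5^{2}\equiv 1\pmod 8$, counts parities of the exponents, and arrives at the uniform formula $\sigma(5^{2a})\equiv 6a+1\pmod 8$, from which the four cases drop out. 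Your approach is the more elementary of the two (it never leaves the modulus $8$ and avoids inverting $4$), and it produces the closed expression $6a+1$ that makes the dependence on $a\pmod 4$ transparent; the paper's method, on the other hand, would generalize more readily to bases other than $5$ where the termwise period modulo $8$ might be less convenient.
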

\begin{proof}
Since $a$ is a positive integer, we can write $a=4q+r$ for some $q,r\in \mathbb{Z}_{\geq 0},0\leq r\leq 3$, therefore $\sigma(5^{2a})=\frac{5^{2a+1}-1}{4}=\frac{5^{8q+2r+1}-1}{4}$. Let
    \begin{align*}
        \sigma(5^{2a})=\frac{5^{8q+2r+1}-1}{4} \equiv c(q,r) \pmod 8 ~~~~~~~\text{where}~~c(q,r)~~\text{depends on }~q,r.
         \end{align*}
Then we can write 
    \begin{align*}
        5^{8q+2r+1} \equiv 4c(q,r)+1 \pmod{32}.
    \end{align*}
    Since $5^8 \equiv 1 \pmod {32}$ we have, 

    \begin{align*}
        5^{2r+1}\equiv 4c(q,r)+1 \pmod{32}.
        \end{align*}
        This shows that $c(q,r)$ is independent of $q$ i.e., $c(q,r)=c(r)$ only depends on $r$. Thus we have \\
\begin{align*}
    c(r)\equiv  \left\{
\begin{array}{ll}
      1 \pmod 8 & \text{if~} r=0\\
      7 \pmod 8 & \text{if~} r=1\\
       5 \pmod 8 & \text{if~} r=2\\
        3 \pmod 8 & \text{if~} r=3\\
\end{array} 
\right. .
\end{align*}
This completes the proof.
\end{proof}

\begin{remark}\label{*}
 If $F=5^{2a_1}\cdot{\displaystyle\prod_{i=2}^{\omega(F)}p_i^{2a_i}}$($p_1=5$) is a friend of $10$ then
 \begin{align*}
 \sigma(5^{2a_1})\cdot\prod_{i=2}^{\omega(F)}\sigma(p_i^{2a_i})=9\cdot 5^{2a_1-1}\cdot\prod_{i=2}^{\omega(F)}p_i^{2a_i},   
\end{align*}
this shows that $9\mid\mid \sigma(5^{2a_1})\cdot{\displaystyle\prod_{i=2}^{\omega(F)}\sigma(p_i^{2a_i})}=\sigma(F)$ as $p_i\geq 7$, for all $2\leq i\leq\omega(F)$.
\end{remark}
We will make significant use of Remark \ref{*} in the forthcoming proofs. 
\subsection{Proof of Theorem \ref{thm1.1}}
Let $F=5^{2a_1}\cdot{\displaystyle\prod_{i=2}^{\omega(F)}p_i^{2a_i}}$($p_1=5$) be a friend of 10 and also assume that $a_i\equiv 13 \pmod {27}$ for all $1\leq i\leq\omega(F)$. Then in particular,  $a_1\equiv 13 \pmod {27}$ and thus $19\mid \sigma(5^{2a_1})$ follows from Theorem \ref{thm 3.1} (set $p=19$ and $q=5$ in Theorem \ref{thm 3.1} and observe that $2a_1+1\equiv 0\pmod{9}$ and $5^9\equiv 1 \pmod{19}$). Since $19\mid \sigma(5^{2a_1})$ we have $19\mid F$. Without loss of generality suppose that $p_2=19$ then we can write $$F=5^{2a_1}\cdot19^{2a_2}\cdot\prod_{i=3}^{\omega(F)}p_i^{2a_i}.$$
Note that
$$19^{27}\equiv 1 \pmod{486},$$
then
$$19^{27\beta}\equiv 1 \pmod{486},~~\text{for}~\beta \in \mathbb{Z}^{+},$$
which is equivalent to
$$\frac{19^{27\beta}-1}{18}\equiv 0 \pmod{27},$$
that is
$$\sigma(19^{27\beta-1})\equiv 0 \pmod{27}.$$
Therefore for all $\beta\in \mathbb{Z}^{+}$ we have 
$$27\mid \sigma(19^{27\beta-1}).$$
Since $a_2\equiv 13 \pmod{27}$, we have $2a_2+1\equiv 0 \pmod{27}$ therefore setting $\beta= \frac{2a_2+1}{27}\in\mathbb{Z}^{+}$, we obtain 
$$27\mid \sigma(19^{2a_2})$$
which immediately implies that
$$27\mid \sigma(5^{2a_1})\cdot \sigma(19^{2a_2})\cdot\prod_{i=3}^{\omega(F)}\sigma(p_i^{2a_i}),$$
but this contradicts Remark \ref{*}. Therefore, our assumption  that $a_i\equiv 13 \pmod{27}$ is true for all $1\leq i\leq \omega(F)$ is wrong and hence all $a_i$ can not be congruent to $13$ modulo $27$. This completes the proof.\qed\\\\
We now prove a better version of Theorem \ref{thm1.1}. The key steps in proving Theorem \ref{thm1.2} rely on Remark \ref{*} and Theorem \ref{thm 3.1}.
\subsection{Proof of Theorem \ref{thm1.2}}
Let $F=5^{2a_1}\cdot{\displaystyle\prod_{i=2}^{\omega(F)}p_i^{2a_i}}$($p_1=5$) be a friend of 10 and also assume that $a_i\equiv 1 \pmod {3}$ for all $1\leq i\leq\omega(F)$. Then $2a_{i}+1\equiv 0 \pmod{3}$ for all $1\leq i\leq\omega(F)$. Note that, if we consider $p,q=p_i$ with $p^{k-1}\mid\mid (p_{i}-1)$ in Theorem \ref{thm 3.1} and if we have $f=3$ i.e., $p_i^{3}\equiv 1 \pmod{p^{k}}$ then we can use the fact $2a_{i}+1\equiv 0 \pmod{3}$ to conclude that $p\mid \sigma(p_{i}^{2a_i})$. Set $p=31,q=p_1=5$ in Theorem \ref{thm 3.1}, then $k=1$ and observe that $5^3\equiv 1 \pmod{31}$ therefore we have $31\mid \sigma(5^{2a_1})$. Since $31\mid \sigma(5^{2a_1})$ we have $31\mid F$ thus without loss of generality suppose that $p_2=31$. We now show that $3\cdot 331 \mid \sigma(31^{2a_2})$. If we set $p=3,q=p_2=31$ in Theorem \ref{thm 3.1}, then $k=2$, since $31^{3}\equiv 1 \pmod{3^2}$ we have $3\mid \sigma(31^{2a_2})$. Again set $p=331,q=p_2=31$ in Theorem \ref{thm 3.1}, then $k=1$, since $31^3\equiv 1 \pmod{331}$ we have $331\mid \sigma(31^{2a_2})$. Since $331\mid \sigma(31^{2a_2})$ we get $331\mid F$ and so without loss of generality set $p_3=331$. We claim that $3\cdot 7\mid \sigma(331^{2a_3})$.
Set $p=3,q=p_3=331$ in Theorem \ref{thm 3.1}, then $k=2$, since $331^3\equiv 1 \pmod{3^2}$ we have $3\mid \sigma(331^{2a_3})$, again set $p=7,q=p_3=331$ in Theorem \ref{thm 3.1}, then $k=1$, since $331^3\equiv 1 \pmod{7}$ we get $7\mid \sigma(331^{2a_3})$ which proves our claim. Since $7\mid \sigma(331^{2a_3})$ we have $7\mid F$, thus without loss of generality suppose that $p_4=7$. Our final claim is that $3\mid \sigma(7^{2a_4})$. Again setting $p=3,q=p_4=7$ in Theorem \ref{thm 3.1}, we get $k=2$, since $7^3\equiv 1 \pmod{3^2}$ we get $3\mid \sigma(7^{2a_4})$. Observe that, $3\mid\sigma(31^{2a_2}),~3\mid \sigma(331^{2a_3}),~\text{and},~3\mid\sigma(7^{2a_4})$ which immediately implies that $27\mid \sigma(5^{2a_1})\cdot{\displaystyle\prod_{i=2}^{\omega(F)}\sigma(p_i^{2a_i})}=\sigma(F)$ which clearly contradicts Remark \ref{*}. Therefore, our assumption  that $a_i\equiv 1 \pmod{3}$ is true for all $1\leq i\leq \omega(F)$ is wrong and hence all $a_i$ can not be congruent to $1$ modulo $3$. This completes the proof.\qed\\\\
We now derive the congruence relations between the divisors of $F$.
\subsection{Proof of Theorem \ref{thm1.3}}
    Let $F=5^{2a}\cdot Q^2$ be a friend of 10. Assume that $a$ is even. Then from Proposition \ref{prop1} either $\sigma(5^{2a})\equiv 1 \pmod 8$ or $\sigma(5^{2a})\equiv 5 \pmod 8$. Suppose that $\sigma(5^{2a})+\sigma(Q^2) \equiv k \pmod 8$, where $k\in\{0,2,4,6\}$. Since $F=5^{2a}\cdot Q^2$ is a friend of 10, we have
\begin{align}\label{0.1}
\sigma(5^{2a})\cdot\sigma(Q^2)=9\cdot5^{2a-1}\cdot Q^2\equiv5 \pmod 8~~\text{as $Q^2$ is odd and}~a\in\mathbb{Z^+}. 
\end{align}
Now we consider the following two cases,\\
\texttt{Case-1}: If $\sigma(5^{2a})\equiv 1 \pmod 8$ we have
$$\sigma(Q^2) \equiv k-1 \pmod 8,$$
using (\ref{0.1}) we get 
$$k-1\equiv5 \pmod 8$$
that is
$$k\equiv6 \pmod 8.$$
It follows that, for $\sigma(5^{2a})\equiv 1 \pmod 8$, $\sigma(5^{2a})+\sigma(Q^2) \equiv 6 \pmod 8$.\\
\texttt{Case-2}: If $\sigma(5^{2a})\equiv 5 \pmod 8$ then from (\ref{0.1}) we have
$$5\sigma(Q^2)\equiv5 \pmod8.$$
Since $\sigma(Q^2)\equiv k-5 \pmod 8$ we get
$$5(k-5)\equiv5 \pmod 8$$
that is
$$k\equiv6 \pmod 8.$$
This shows that, for $\sigma(5^{2a})\equiv 5 \pmod 8$, $\sigma(5^{2a})+\sigma(Q^2) \equiv 6 \pmod 8$.\\\\
Conversely, let $\sigma(5^{2a})+\sigma(Q^2)\equiv 6 \pmod 8$. If possible, assume that $a$ is odd positive integer. Then from Proposition \ref{prop1} either $\sigma(5^{2a})\equiv 3 \pmod 8$ or $\sigma(5^{2a})\equiv 7 \pmod 8$.\\
\texttt{Case-3}: If $\sigma(5^{2a})\equiv 3 \pmod 8$, using (\ref{0.1}) and the fact $\sigma(5^{2a})+\sigma(Q^2)\equiv 6 \pmod 8$, we get
$$9\equiv 5 \pmod 8$$
which is absurd. Therefore, $\sigma(5^{2a})\equiv 3 \pmod 8$ is impossible.\\
\texttt{Case-4}: If $\sigma(5^{2a})\equiv 7 \pmod 8$, again using (\ref{0.1}) and the fact $\sigma(5^{2a})+\sigma(Q^2)\equiv 6 \pmod 8$, we get
$$49\equiv 5 \pmod 8,$$
which is also absurd. Therefore, $\sigma(5^{2a})\equiv 7 \pmod 8$ is impossible.\\
Thus, our assumption on $a$ is wrong and hence $a$ must be an even positive integer. This completes the proof.\qed\\\\
We proceed in a similar manner to prove the next theorem.
\subsection{Proof of Theorem \ref{thm1.4}}
    Let $F=5^{2a}\cdot Q^2$ be a friend of 10. Assume that $a$ is odd. Then from Proposition \ref{prop1} either $\sigma(5^{2a})\equiv 3 \pmod 8$ or $\sigma(5^{2a})\equiv 7 \pmod 8$. Suppose that $\sigma(5^{2a})+\sigma(Q^2) \equiv k \pmod 8$, where $k\in\{0,2,4,6\}$. Since $F=5^{2a}\cdot Q^2$ is a friend of 10, we have
\begin{align}\label{0.2}
\sigma(5^{2a})\cdot\sigma(Q^2)=9\cdot5^{2a-1}\cdot Q^2\equiv5 \pmod 8~~\text{as $Q^2$~is odd and}~a\in\mathbb{Z^+}.   
\end{align}
Again we consider two cases,\\
\texttt{Case-1}: For $\sigma(5^{2a})\equiv 3 \pmod 8$ we have
$$\sigma(Q^2) \equiv k-3 \pmod 8$$
using (\ref{0.2}) we can write 
$$3(k-3)\equiv5 \pmod 8$$
that is
$$3k\equiv 6 \pmod 8.$$
Since $(3,8)=1$ we get $k\equiv 2 \pmod 8$ which proves that, for $\sigma(5^{2a})\equiv 3 \pmod 8$, $\sigma(5^{2a})+\sigma(Q^2) \equiv 2 \pmod 8$.\\
\texttt{Case-2}: Now for $\sigma(5^{2a})\equiv 7 \pmod 8$ and from (\ref{0.2}) we have
$$7\sigma(Q^2)\equiv5 \pmod8$$
Since $\sigma(Q^2)\equiv k-7 \pmod 8$ we obtain
$$7(k-7)\equiv5 \pmod 8$$
that is
$$k\equiv2 \pmod 8.$$
Therefore, it follows that for $\sigma(5^{2a})\equiv 7 \pmod 8$ we have $\sigma(5^{2a})+\sigma(Q^2) \equiv 2 \pmod 8$.\\\\
Conversely, let $\sigma(5^{2a})+\sigma(Q^2)\equiv 2 \pmod 8$. If possible, assume that $a$ is even positive integer. Then from Proposition \ref{prop1} either $\sigma(5^{2a})\equiv 1 \pmod 8$ or $\sigma(5^{2a})\equiv 5 \pmod 8$.\\
\texttt{Case-3}: If $\sigma(5^{2a})\equiv 1 \pmod 8$, using (\ref{0.2}) and the fact $\sigma(5^{2a})+\sigma(Q^2)\equiv 2 \pmod 8$, we get
$$1\equiv 5 \pmod 8,$$
which is absurd. Therefore, $\sigma(5^{2a})\equiv 1 \pmod 8$ is impossible.\\
\texttt{Case-4}: If $\sigma(5^{2a})\equiv 5 \pmod 8$, again using (\ref{0.2}) and the fact $\sigma(5^{2a})+\sigma(Q^2)\equiv 2 \pmod 8$, we get
$$25\equiv 5 \pmod 8,$$
which is absurd too. Therefore, $\sigma(5^{2a})\equiv 5 \pmod 8$ is impossible.\\
Thus our assumption on $a$ is wrong and hence $a$ must be an odd positive integer. This completes the proof\qed
\subsection{Proof of Theorem \ref{thm1.5}}
Let $F=5^{2a_1}\cdot{\displaystyle\prod_{i=2}^{\omega(F)}p_i^{2a_i}}$($p_1=5$) be a friend of 10. Applying AM-GM inequality, we obtain
\begin{align*}
    \sum_{j=0}^{2a_i}p_i^j>(2a_i+1)\cdot\left( \prod_{j=0}^{2a_{i}}p_i^j\right)^{\frac{1}{2a_{i}+1}}&=(2a_i+1)\cdot p_i^{\frac{(2a_i)(2a_i+1)}{2(2a_i+1)}}\\
    &=(2a_{i}+1)\cdot p_{i}^{a_{i}},
\end{align*}
that is $\sigma(p_i^{2a_i})>(2a_i+1)\cdot p_i^{a_i}$, therefore we can write
\begin{align}\label{2}
\sigma(5^{2a_1})\cdot\prod_{i=2}^{\omega(F)}\sigma(p_i^{2a_i})>\prod_{i=1}^{\omega(F)}(2a_i+1)\cdot 5^{a_1}\cdot\prod_{i=2}^{\omega(F)}p_i^{a_i}.
\end{align}
Since $F$ is a friend of $10$, we have
\begin{align}\label{3}
    \sigma(5^{2a_1})\cdot\prod_{i=2}^{\omega(F)}\sigma(p_{i}^{2a_i})=9\cdot 5^{2a_1-1}\cdot \prod_{i=2}^{\omega(F)}p_{i}^{2a_i},
\end{align}
using (\ref{2}) and (\ref{3}) we get
$$
9\cdot 5^{2a_1-1}\cdot\prod_{i=2}^{\omega(F)}p_{i}^{2a_i}>\prod_{i=1}^{\omega(F)}(2a_i+1)\cdot 5^{a_1}\cdot\prod_{i=2}^{\omega(F)}p_i^{a_i},
$$
multiplying both sides by $5$, we get
$$
9\cdot 5^{2a_1}\prod_{i=2}^{\omega(F)}p_{i}^{2a_i}>5\cdot\prod_{i=1}^{\omega(F)}(2a_i+1)\cdot 5^{a_1}\cdot\prod_{i=2}^{\omega(F)}p_i^{a_i}
$$
that is
$$ 9\cdot F>5\cdot\prod_{i=1}^{\omega(F)}(2a_i+1)\cdot F^{\frac{1}{2}},
$$
squaring both sides and then dividing both sides by $81F$,  we obtain
$$F>\frac{25}{81}\cdot\prod_{i=1}^{\omega(F)}(2a_i+1)^2.$$
This finishes the proof.
\qed
\section{Concluding Remarks}
We have established new results concerning the divisors of friends of $10$. Several positive integers, including 
$14,15,20,$ and many others, are suspected to be solitary, however, no \textbf{general method} is currently known for determining whether a number is friendly or solitary. If any of the suspected solitary numbers up to 
$372$ is actually a friendly number, then its smallest friend must be strictly greater than $10^{30}$\cite{OEIS}.
\section{Acknowledgment}
The author wishes to thank the referee for his/her valuable suggestions towards the improvement of the manuscript.

\end{document}